\theoremstyle{plain}%
\newtheorem{proposition}{Proposition}[section]%
\newtheorem{theorem}[proposition]{Theorem}%
\newtheorem{lemma}[proposition]{Lemma}%
\newcommand{\mc}[1]{\mathcal{#1}}
\newcommand{\proten}{{\widehat{\otimes}}}
\newcommand{\id}{\operatorname{id}}
\newcommand{\ip}[2]{{\langle {#1} , {#2} \rangle}}
\newcommand{\lin}{\operatorname{lin}}
\newcommand{\vnten}{\overline\otimes}
\newcommand{\G}{{\mathbb G}}
\newcommand{\C}{{\mathbb C}}
\newcommand{\R}{{\mathbb R}}
\newcommand{\Tr}{\operatorname{Tr}}
\begin{document}

\large
\title{A Note on Operator Biprojectivity of Compact Quantum Groups}
\author{Matthew Daws}
\date{}
\maketitle

\begin{abstract}
Given a (reduced) locally compact quantum group $A$, we can consider the
convolution algebra $L^1(A)$ (which can be identified as the predual of the von
Neumann algebra form of $A$).  It is conjectured that $L^1(A)$ is operator
biprojective if and only if $A$ is compact.  The ``only if'' part always holds,
and the ``if'' part holds for Kac algebras.  We show that if the splitting morphism
associated with $L^1(A)$ being biprojective can be chosen to be completely positive,
or just contractive, then we already have a Kac algebra.  We give another proof of
the converse, indicating how modular properties of the Haar state seem to be
important.

\noindent\emph{Keywords:} Compact quantum group, Biprojective, Kac algebra,
Modular automorphism group.

\noindent 2000 \emph{Mathematical Subject Classification:}
46L89, 46M10 (primary),
22D25, 46L07, 46L65, 47L25, 47L50, 81R15.
\end{abstract}

\section{Introduction}

A Banach algebra $A$ is \emph{biprojective} if the multiplication map
$\Delta_*:A\proten A\rightarrow A$ has a right inverse in the category of
$A$-bimodule maps.  This can be thought of as a ``finiteness condition''.
In particular, the group algebra $L^1(G)$ is biprojective if and only if
$G$ is compact, see \cite[Chapter~IV, Theorem~5.13]{hel}.

When dealing with more non-commutative (or ``quantum'') algebras (here we
focus on $L^1(G)^* = L^\infty(G)$ when we suggest that the classical situation
is commutative) there is a large amount of evidence that \emph{operator spaces}
form the correct category to work in.  For example, if we consider the Fourier
algebra $A(G)$, then $A(G)$ is \emph{operator} biprojective if and only if
$G$ is discrete, \cite{wood}.  When $G$ is abelian, as $A(G) \cong L^1(\hat G)$,
and $\hat G$ is compact if and only if $G$ is discrete, this result is in
full agreement with what we might expect.  By contrast, if we ask when $A(G)$
is biprojective, then, if $G$ is discrete and almost abelian (contains a finite-index
abelian subgroup) then $A(G)$ is biprojective.  Conversely, if $A(G)$ is
biprojective, then $G$ is discrete, and either almost abelian, or is non-amenable
yet does not contain $\mathbb F_2$, see \cite{runde}.

In this note, we shall continue the study of when the convolution algebra
of a (reduced) compact quantum group is operator biprojective.  It was shown in
\cite[Theorem~4.12]{aristov} that if the convolution algebra of a locally compact quantum
group $\G$ is operator biprojective, then $\G$ is already compact.  Conversely,
if $\G$ is a compact Kac algebra, then $\G$ is operator biprojective.
We shall show that if the right inverse to $\Delta_*$ can be chosen to be
completely contractive, then $\G$ must already be a Kac algebra.  We make
some remarks on the general case.  We indicate that the modular theory of
the Haar state seems to be important outside of the Kac case, and it seems
likely that a better understanding of how to deal with how the coproduct
iteracts with the modular automorphism group will be necessary to completely
characterise when the convolution algebra of $\G$ is operator biprojective.

We shall follow the notation of \cite{ER}, and in particular, write $\proten$
for the operator space projective tensor product, and write $\mc{CB}(E,F)$
to denote the space of complete bounded linear maps between operator spaces
$E$ and $F$.

\section{Locally compact quantum groups}

Locally compact quantum groups \cite{kus1, kus3} are an axiomatic framework which
encompass the $L^1(G)$ algebras, the Fourier algebra $A(G)$, and various
``quantum'' examples, for example, Woronowicz's compact quantum groups.  Kac
algebras \cite{ES} are an earlier axiomatic framework which fails to encompass
many of the ``quantum'' examples, for example \cite{woro3}.

However, we shall concentrate on the compact case, which is technically easier.
We shall follow the presentation of \cite{timm},
which in turn closely follows Woronowicz's original papers \cite{woro1} and \cite{woro2}.
See also readable, non-technical accounts in \cite{kus3}, and the survey \cite{maes},
although be aware that these sources use different notation.

A compact quantum semigroup is a unital C$^*$-algebra $A$ equipped with a unital
$*$-homomorphism $\Delta:A\rightarrow A\otimes_{\min} A$ such that $(\Delta\otimes\iota)\Delta
= (\iota\otimes\Delta)\Delta$.  A compact quantum group is a compact quantum
semigroup $(A,\Delta)$ which satisfies the \emph{cancellation laws}, namely that
\[ \Delta(A)(A\otimes 1) := \lin\{ \Delta(a)(b\otimes 1) : a,b\in A \}, \quad
\Delta(A)(1\otimes A), \]
are both dense in $A\otimes_{\min}A$.  If $G$ is a compact semigroup, then we may
set $A=C(G)$ and $\Delta(f)(s,t) = f(st)$ to get a compact quantum semigroup $(A,\Delta)$.
Then the cancellation laws correspond to $G$ having the cancellation laws: namely
that if $st=sr$ for $s,t,r\in G$, then $t=r$, and similarly with the orders reversed.
As sketched in \cite{maes}, these are equivalent to $G$ being a group.

From now on, fix a compact quantum group $(A,\Delta)$.
These axioms imply that $A$ carries a unique \emph{Haar state}, that is, a state
$\varphi\in A^*$ such that
\[ (\varphi\otimes\iota)\Delta(a) = \varphi(a) 1 = (\iota\otimes\varphi)\Delta(a)
\qquad (a\in A). \]
We can form the GNS construction $(H,\Lambda)$ for $\varphi$.  We shall always suppose
that $(A,\Delta)$ is \emph{reduced}, that is, that $\varphi$ is faithful.  As such,
we shall identify $A$ with a concrete C$^*$-algebra acting on $H$.  If $\varphi$ is
not faithful, then we may quotient by its kernal $N=\{ a\in A : \varphi(a^*a)=0 \}$
to obtain a reduced compact quantum group.  Note that $N$ is an ideal because
$\varphi$ is a KMS weight (see below), see the details in \cite[Theorem~2.1]{bmt}.

Let $M = A''$ be the von Neumann algebra generated by $A$.  Then $\Delta$ extends
to a normal $*$-homomorphism $\Delta:M\rightarrow M\vnten M$.  Then, by
\cite[Theorem~7.2.4]{ER}, $(M\vnten M)_* = M_* \proten M_*$ and normality of $\Delta$
induces a complete contraction $\Delta_*:M_* \proten M_* \rightarrow M_*$.  That
$\Delta$ is coassociative implies that $\Delta_*$ is associative, so $M_*$ becomes a
completely contractive Banach algebra.  If we started with a compact group $G$, then
$M_*$ is nothing but $L^1(G)$, and so we refer to $M_*$ as the \emph{convolution algebra}
of $(A,\Delta)$.  For more on (locally) compact quantum groups in the von Neumann
algebra setting see \cite{kus2}.

A \emph{finite-dimensional corepresentation} of
$(A,\Delta)$ is a matrix $u=(u_{i,j}) \in \mathbb M_n(A)$ such that
\[ \Delta(u_{ij}) = \sum_{k=1}^n u_{ik} \otimes u_{kj}
\qquad (1\leq i,j\leq n). \]
There are suitable notions of \emph{intertwiner} between corepresentations,
and what an \emph{irreducible} corepresentation is.  Every
finite-dimensional corepresentation can be written as the direct sum of
irreducible corepresentations.  Using the Haar state, it can be shown that every
finite-dimensional corepresentation is equivalent to a \emph{unitary} one, that is,
where $u\in\mathbb M_n(A)$ is unitary.  The general corepresentation theory of
$(A,\Delta)$ parallels the representation theory of compact groups very closely.

Let $\{u^\alpha = (u^\alpha_{ij})_{i,j=1}^{n_\alpha} : \alpha\in\mathbb A\}$ be
a maximal family of finite-dimensional
irreducible unitary co-representations of $(A,\Delta)$.  Let $\alpha_0\in\mathbb A$
be such that $v^{\alpha_0} = 1$, the trivial corepresentation.  Let $\mc A$ be the
algebra generated by $\{ u^\alpha_{ij} : \alpha\in\mathbb A, 1 \leq i,j \leq n_\alpha \}$
in $A$.  Then $\mc A$ is a \emph{Hopf $*$-algebra}, and
$\{ u^\alpha_{ij} : \alpha\in\mathbb A, 1 \leq i,j \leq n_\alpha \}$ forms a basis for $\mc A$.
This means that $\mc A$ is a $*$-algebra, that $\Delta$ restricts to give a $*$-homomorphism
$\Delta: \mc A\rightarrow \mc A\otimes \mc A$ (the algebraic tensor product) and there
exist maps $\epsilon:\mc A\rightarrow\mathbb C$ and $S:\mc A\rightarrow\mc A$, the
\emph{counit} and \emph{antipode}, satisfying the usual properties.
Indeed, for $\alpha\in\mathbb A$ and $1\leq i,j\leq n_\alpha$, we have that
\begin{gather*}
\Delta\big( u^\alpha_{i,j} \big) = \sum_{k=1}^{n_\alpha} u^\alpha_{i,k}
\otimes u^\alpha_{k,j}, \quad
S(u^\alpha_{i,j}) = \big( u^\alpha_{j,i} \big)^*, \quad
\epsilon\big( u^\alpha_{i,j} \big) = \delta_{ij}, \quad
\varphi\big( u^\alpha_{i,j} \big) = \delta_{\alpha, \alpha_0}. \end{gather*}
Furthermore, for each $\alpha\in\mathbb A$, there exists a unique positive invertible
matrix $F^\alpha \in \mathbb M_{n_\alpha}$ with $\Tr F^\alpha = \Tr (F^\alpha)^{-1}$,
and such that
\[ \varphi\big( (u^\beta_{ij})^* u^\alpha_{kl} \big) = \delta_{\alpha\beta}
\delta_{jl} \frac{((F^\alpha)^{-1})_{ki}}{\Tr(F^\alpha)},
\quad \varphi\big( u^\beta_{ij} (u^\alpha_{kl})^* \big) = \delta_{\alpha\beta}
\delta_{ik} \frac{F^\alpha_{lj}}{\Tr(F^\alpha)}. \]
The Hopf $*$-algebra $\mc A$ is norm dense in $A$, and is the unique such dense
Hopf $*$-algebra, see \cite[Appendix~A]{bmt}.

These ``$F$-matricies'' allow us to define characters on $\mc A$.  For $z\in\C$,
define
\[ f_z:\mc A\rightarrow \C, \quad u^\alpha_{ij} \mapsto \big((F^\alpha)^z\big)_{ij}. \]
As $F^\alpha$ is positive, the matrix $(F^\alpha)^z$ makes sense.  Then, for
$w,z\in\C$, define
\[ \rho_{z,w}:\mc A\rightarrow\mc A, \quad
u^\alpha_{ij} \mapsto \sum_{k,l=1}^{n_\alpha} f_w(u^\alpha_{ik}) f_z(u^\alpha_{lj})
u^\alpha_{kl}. \]
Then $\rho_{z,w}$ is an automorphism of $\mc A$ with inverse $\rho_{-z,-w}$, and
if $z$ and $w$ are purely imaginary, then $\rho_{z,w}$ is a $*$-automorphism of $\mc A$.

In particular, set
\[ \sigma_z = \rho_{iz,iz}, \quad \tau_z = \rho_{-iz,iz} \qquad (z\in\C). \]
Then $(\sigma_t)_{t\in\R}$ is the (restriction) of the modular automorphism group for
$\varphi$, and $(\tau_t)_{t\in\R}$ is the (restriction) of the scaling group.
For example, we can calculate that $\varphi(a \sigma_{-i}(b)) = \varphi(ba)$ for
$a,b\in\mc A$, a relation which we expect, as $\varphi$ is KMS for $\sigma$.
See \cite{tak2} for more details on modular theory of weights.

\begin{proposition}\label{spec_coreps}
There exists a maximal family of finite-dimensional irreducible unitary co-representations
of $(A,\Delta)$, say $\{v^\alpha = (v^\alpha_{ij})_{i,j=1}^{n_\alpha} : \alpha\in\mathbb A\}$,
with the property that the associated $F$-matricies are all diagonal, say
$F^\alpha$ has diagonal entries $(\lambda^\alpha_i)_{i=1}^{n_\alpha}$, so that
$\sum_i \lambda^\alpha_i = \sum_i (\lambda^\alpha_i)^{-1} = Tr_\alpha$, say.
\end{proposition}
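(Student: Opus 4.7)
The plan is to obtain the new family by diagonalising each $F^\alpha$ and conjugating the corresponding corepresentation by the diagonalising unitary. Since $F^\alpha$ is positive and invertible, there is a scalar unitary $U^\alpha \in \mathbb M_{n_\alpha}$ with $(U^\alpha)^* F^\alpha U^\alpha = D^\alpha$, where $D^\alpha$ is diagonal with strictly positive entries $(\lambda^\alpha_i)_{i=1}^{n_\alpha}$. I would then define
\[ v^\alpha_{ij} := \sum_{k,l=1}^{n_\alpha} \overline{U^\alpha_{ki}}\, u^\alpha_{kl}\, U^\alpha_{lj} \qquad (1 \le i,j \le n_\alpha), \]
and take $\{v^\alpha : \alpha \in \mathbb A\}$ as the replacement family.

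The next step is to verify that this is a legitimate replacement: each $v^\alpha$ is again a unitary corepresentation, and the scalar matrix $U^\alpha$ intertwines $v^\alpha$ with $u^\alpha$. Both are routine matrix manipulations: the corepresentation identity $\Delta(v^\alpha_{ij}) = \sum_k v^\alpha_{ik} \otimes v^\alpha_{kj}$ follows by expanding and using $\sum_p U^\alpha_{bp}\overline{U^\alpha_{cp}} = \delta_{bc}$, while unitarity of $v^\alpha$ follows from unitarity of $u^\alpha$ together with the same orthogonality of the rows/columns of $U^\alpha$. Because each $v^\alpha$ is equivalent to the original irreducible $u^\alpha$, irreducibility and maximality of the family are preserved.

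The heart of the argument is then to compute the new $F$-matrix from the orthogonality relations given for $u^\alpha$. Substituting the definition of $v^\alpha_{ij}$ and collapsing the sum over $q$ via $\sum_q \overline{U^\alpha_{qj}} U^\alpha_{ql} = \delta_{jl}$, one obtains
\[ \varphi\big( (v^\alpha_{ij})^* v^\alpha_{kl} \big) = \delta_{jl}\,\frac{((U^\alpha)^* (F^\alpha)^{-1} U^\alpha)_{ki}}{\Tr F^\alpha} = \delta_{jl}\, \frac{((D^\alpha)^{-1})_{ki}}{\Tr D^\alpha}, \]
using unitary invariance of the trace. The second orthogonality relation is handled identically. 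By uniqueness of the matrix in the orthogonality relations, the $F$-matrix associated with $v^\alpha$ is precisely $D^\alpha$, which is diagonal. The required identity $\sum_i \lambda^\alpha_i = \sum_i (\lambda^\alpha_i)^{-1}$ is then just $\Tr D^\alpha = \Tr(D^\alpha)^{-1}$, inherited from the trace condition on $F^\alpha$ again by unitary invariance of the trace.

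I do not expect any genuine obstacle here: this is a bookkeeping exercise whose only content is the observation that the defining data of a corepresentation (unitarity, the coproduct identity, and the trace-normalised orthogonality relations) all transform covariantly under conjugation by a scalar unitary, so the problem reduces to the purely linear-algebraic fact that a positive matrix can be diagonalised by a unitary.
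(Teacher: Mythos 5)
Your proposal is correct and follows essentially the same route as the paper: diagonalise each positive $F^\alpha$ by a scalar unitary, conjugate the corepresentation by that unitary, and push the orthogonality relations through the conjugation to see that the new $F$-matrix is the diagonal matrix of eigenvalues. The paper's proof is exactly this computation (with your $U^\alpha$ called $Q^\alpha$), so there is nothing further to add.
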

\begin{proof}
Start with some maximal family $\{u^\alpha = (u^\alpha_{ij})_{i,j=1}^{n_\alpha} : \alpha\in\mathbb A\}$
as before.  As each $F^\alpha$ is positive it can be diagonalised by some unitary matrix
$Q^\alpha \in \mathbb M_{n_\alpha}$.  Let $(\lambda^\alpha_i)_{i=1}^{n_\alpha}$ be the
eigenvalues of $F^\alpha$, so that $\Tr(F^\alpha) = \sum_i \lambda^\alpha_i =
\Tr((F^\alpha)^{-1}) = \sum_i (\lambda^\alpha_i)^{-1}$.
Then $(Q^\alpha)^* F^\alpha Q^\alpha$ is the diagonal matrix with entries
$(\lambda^\alpha_i)_{i=1}^{n_\alpha}$.  Set
\[ v^\alpha_{ij} = \big( (Q^\alpha)^* u^\alpha Q^\alpha \big)_{ij}
= \sum_{k,l=1}^{n_\alpha} \overline{Q^\alpha_{ki}} u^\alpha_{kl} Q^\alpha_{lj}
\qquad (\alpha\in\mathbb A, 1\leq i,j\leq n_\alpha). \]
It is now routine to check that $v^\alpha$ is a unitary corepresentation matrix,
and that the properties above still hold for the family $\{ v^\alpha_{ij} \}$.
For example, we see that
\begin{align*} \varphi\big( (v^\beta_{ij})^* v^\alpha_{kl} \big) &= 
\varphi\Big(\Big( \sum_{r,s} \overline{Q^\beta_{ri}} u^\beta_{rs} Q^\beta_{sj} \Big)^*
\sum_{t,p} \overline{Q^\alpha_{tk}} u^\alpha_{tp} Q^\alpha_{pl} \Big)
= \sum_{r,s,t,p} Q^\beta_{ri} \overline{Q^\beta_{sj}} \overline{Q^\alpha_{tk}} Q^\alpha_{pl}
  \varphi\big( (u^\beta_{rs})^* u^\alpha_{tp} \big) \\
&= \delta_{\alpha\beta} \frac{1}{\Tr(F^\alpha)} \sum_{r,s,t} Q^\beta_{ri} \overline{Q^\beta_{sj}}
  \overline{Q^\alpha_{tk}} Q^\alpha_{sl} ((F^\alpha)^{-1})_{tr} \\
&= \delta_{\alpha\beta} \frac{1}{\Tr_\alpha} \sum_s (Q^\alpha)^*_{js} Q^\alpha_{sl}
  \big( (Q^\alpha)^* (F^\alpha)^{-1} Q^\alpha \big)_{ki}
= \delta_{\alpha\beta} \delta_{jl} \delta_{ki} \frac{1}{\Tr_\alpha} \frac{1}{\lambda^\alpha_i}.
\end{align*}
Similar calculations show that
\[ \varphi\big( v^\beta_{ij} (v^\alpha_{kl})^* \big)
= \delta_{\alpha\beta} \delta_{ik} \delta_{jl} \frac{\lambda^\alpha_j}{\Tr_\alpha}. \]
and also
\[ f_z(v^\alpha_{ij}) = \delta_{ij} (\lambda^\alpha_i)^z, \qquad
\rho_{z,w}(v^\alpha_{ij}) = (\lambda^\alpha_i)^w (\lambda^\alpha_j)^z v^\alpha_{ij}. \]
\end{proof}

\section{Biprojectivity}

Let $(A,\Delta)$ be a reduced compact quantum group, with associated Haar state
$\varphi$, GNS construction $(H,\Lambda)$, von Neumann algebra $M$ and convolution
algebra $M_*$.  We shall study when $M_*$ is operator biprojective, that is,
whether there is a completely bounded right inverse to $\Delta_*:M_*\proten M_*
\rightarrow M_*$ which is also an $M_*$-bimodule homomorphism.
Henceforth, we shall term such a map $\theta_*$ a \emph{splitting morphism}.

See \cite{aristov,aristov2} for further details on the operator space case, and
\cite[Chapter~IV]{hel} or \cite[Section~4.3]{rundebook} for the classical Banach space setting.

\begin{lemma}
$M_*$ is biprojective if and only if there exists a normal completely
bounded map $\theta:M \vnten M \rightarrow M$ with
\[ \theta\Delta = \id, \quad \Delta\theta = (\theta\otimes\id)(\id\otimes\Delta)
= (\id\otimes\theta)(\Delta\otimes\id). \]
\end{lemma}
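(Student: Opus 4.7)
\emph{Plan.} The lemma is a direct Banach-space dualisation, so I would prove it by setting up a bijection between completely bounded maps $\theta_* : M_* \to M_* \proten M_*$ and normal completely bounded maps $\theta : M \vnten M \to M$, and then translating each of the three defining conditions across that bijection. The bijection rests on the Effros--Ruan identification $(M_* \proten M_*)^* = M \vnten M$ from \cite[Theorem~7.2.4]{ER} already invoked above: the Banach adjoint $\theta := (\theta_*)^*$ lands in $M_*^* = M$, is weak-$*$ to weak-$*$ continuous (hence normal) because Banach adjoints always are, and is completely bounded with the same cb-norm. Conversely a normal completely bounded $\theta$ has a unique completely bounded predual $\theta_*$, and these two operations are mutually inverse.

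The splitting identity translates instantly: because $(\Delta_*)^* = \Delta$, the equation $\Delta_* \theta_* = \id_{M_*}$ dualises to $\theta \Delta = \id_M$. For the bimodule condition, recall the outer actions
\[ \omega \cdot (\mu \otimes \nu) = (\omega \star \mu) \otimes \nu, \qquad (\mu \otimes \nu) \cdot \omega = \mu \otimes (\nu \star \omega), \]
where $\omega \star \mu = \Delta_*(\omega \otimes \mu)$. I would pair the left-module identity $\theta_*(\omega \star \mu) = \omega \cdot \theta_*(\mu)$ with an arbitrary $X \in M \vnten M$ and unpack both sides using the defining relations of $\theta$ and of the actions. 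Using that $\theta \circ (\omega \otimes \id \otimes \id) = (\omega \otimes \id) \circ (\id \otimes \theta)$ on $M \vnten M \vnten M$ (checked on elementary tensors and extended by normality of $\theta$ and weak-$*$ density), the pairing becomes
\[ (\omega \otimes \id)\Delta \theta(X) = (\omega \otimes \id)(\id \otimes \theta)(\Delta \otimes \id)(X) \qquad (\omega \in M_*). \]
Since $M_*$ separates points of $M$, this is equivalent to $\Delta \theta = (\id \otimes \theta)(\Delta \otimes \id)$. A mirror calculation from the right-module identity yields $\Delta \theta = (\theta \otimes \id)(\id \otimes \Delta)$.

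The only step that is not purely formal bookkeeping is the slice-map identity $\theta \circ (\omega \otimes \id \otimes \id) = (\omega \otimes \id) \circ (\id \otimes \theta)$ used above; this is a short argument and is the one place where normality of $\theta$ is really used. Everything else is mechanical manipulation of Banach adjoints, so I do not anticipate any substantive obstacle.
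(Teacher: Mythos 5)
Your proposal is correct and follows essentially the same route as the paper: pass to the Banach adjoint via $(M_*\proten M_*)^*=M\vnten M$, note that $\Delta_*\theta_*=\id$ dualises to $\theta\Delta=\id$, and show that the two bimodule identities for $\theta_*$ correspond, after pairing with elements of $M\vnten M$ and commuting slice maps past $\id\otimes\theta$ (resp.\ $\theta\otimes\id$), to $\Delta\theta=(\id\otimes\theta)(\Delta\otimes\id)$ and $\Delta\theta=(\theta\otimes\id)(\id\otimes\Delta)$. The paper runs the same pairing computation (in the direction from $\theta$ to $\theta_*$, with the converse "by reversing the argument"), so there is no substantive difference.
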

\begin{proof}
Suppose that such a $\theta$ exists, so as $\theta$ is normal, there exists
$\theta_* : M_* \rightarrow M_*\proten M_*$ with $\Delta_* \theta_* = \id$.
Then, for $\omega,\tau\in M_*$ and $x\in M$,
\begin{align*} \ip{x}{\theta_*(\omega*\tau)}
&= \ip{\theta(x)}{\Delta_*(\omega\otimes\tau)}
= \ip{(\theta\otimes\id)(\id\otimes\Delta)(x)}{\omega\otimes\tau} \\
&= \ip{(\id\otimes\Delta)(x)}{\theta_*(\omega) \otimes \tau}
= \ip{x}{\theta_*(\omega) * \tau}. \end{align*}
Here we write $*$ for both the product in $M_*$, and the bimodule action of $M_*$
on $M_* \proten M_*$.  Similarly, $\theta_*(\omega * \tau) = \omega * \theta_*(\tau)$,
so we see that $\theta_*$ is a $M_*$-bimodule homomorphism.

The converse is simply a case of reversing the argument.
\end{proof}

In the following section, we shall carefully study the structure of normal completely
bounded maps $M\vnten M\rightarrow M$.  From now on, fix such a map
$\theta:M\vnten M\rightarrow M$ and let $\{ (v^\alpha_{ij})_{i,j=1}^{n_\alpha}
: \alpha\in\mathbb A\}$ be as in Proposition~\ref{spec_coreps}.

\begin{proposition}\label{theta_struc}
We have that $\theta\Delta = \id$ and $\Delta\theta = (\theta\otimes\id)(\id\otimes\Delta)
= (\id\otimes\theta)(\Delta\otimes\id)$ if and only if
there exists a family $\{ X^\alpha \in \mathbb M_{n_\alpha} : \alpha\in\mathbb A\}$
such that, for $\alpha,\beta\in\mathbb A$, $1\leq i,j\leq n_\alpha$ and
$1\leq k,l\leq n_\beta$,
\[ \theta\big( v^\alpha_{ij} \otimes v^\beta_{kl} \big)
= \delta_{\alpha\beta} X^\alpha_{jk} v^\alpha_{il}, \qquad
\sum_{r=1}^{n_\alpha} X^\alpha_{rr} = 1. \]
\end{proposition}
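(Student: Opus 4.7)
The strategy is to handle the easy $(\Leftarrow)$ direction by a direct algebraic check and normality, and to handle $(\Rightarrow)$ by using the Haar-state orthogonality relations to build normal ``coefficient functionals'' that extract the form of $\theta$ on matrix coefficients.

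For $(\Leftarrow)$, assume $\theta$ has the given form. I would compute $\theta\Delta(v^\alpha_{pq}) = \sum_r \theta(v^\alpha_{pr} \otimes v^\alpha_{rq}) = \big(\sum_r X^\alpha_{rr}\big) v^\alpha_{pq} = v^\alpha_{pq}$, and on $v^\alpha_{ij} \otimes v^\beta_{kl}$ verify that both $\Delta\theta$ and $(\theta \otimes \id)(\id \otimes \Delta)$ and $(\id \otimes \theta)(\Delta \otimes \id)$ evaluate to $\delta_{\alpha\beta} X^\alpha_{jk} \sum_m v^\alpha_{im} \otimes v^\alpha_{ml}$. Since $\mc A$ is $\sigma$-weakly dense in $M$ and $\mc A \otimes \mc A$ is $\sigma$-weakly dense in $M \vnten M$, and all four maps are normal, the identities extend.

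For $(\Rightarrow)$, write $T^{\alpha\beta}_{ij,kl} := \theta(v^\alpha_{ij} \otimes v^\beta_{kl}) \in M$. Equating $(\theta \otimes \id)(\id \otimes \Delta)$ and $(\id \otimes \theta)(\Delta \otimes \id)$ on $v^\alpha_{ij} \otimes v^\beta_{kl}$ gives the identity
\[ \sum_m T^{\alpha\beta}_{ij,km} \otimes v^\beta_{ml} = \sum_m v^\alpha_{im} \otimes T^{\alpha\beta}_{mj,kl} \quad \text{in } M\vnten M. \]
Using Proposition~\ref{spec_coreps}, the orthogonality relations let me define normal functionals $\omega_{(\gamma,p,q)}(x) := \lambda^\gamma_p \Tr_\gamma \, \varphi\big((v^\gamma_{pq})^* x\big) \in M_*$ satisfying $\omega_{(\gamma,p,q)}(v^\delta_{rs}) = \delta_{\gamma\delta}\delta_{pr}\delta_{qs}$. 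Slicing the displayed identity by $\id \otimes \omega_{(\beta,p,l)}$ expresses $T^{\alpha\beta}_{ij,kp}$ as an element of $\lin\{v^\alpha_{im} : 1 \leq m \leq n_\alpha\}$, and slicing by $\omega_{(\alpha,i,p)} \otimes \id$ expresses $T^{\alpha\beta}_{pj,kl}$ as an element of $\lin\{v^\beta_{ml} : 1 \leq m \leq n_\beta\}$. Since these finite-dimensional subspaces are closed and the full family $\{v^\gamma_{rs}\}$ is linearly independent, their intersection is $\{0\}$ when $\alpha \neq \beta$ and is $\C\cdot v^\alpha_{il}$ when $\alpha = \beta$. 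Hence $T^{\alpha\beta}_{ij,kl} = \delta_{\alpha\beta} Y(\alpha,i,j,k,l) \, v^\alpha_{il}$ for some scalar $Y$.

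Next, I would substitute this form into the two identities and compare coefficients of the linearly independent tensors $v^\alpha_{im} \otimes v^\alpha_{ml}$: identity $(a)$ applied to $T^{\alpha\alpha}_{ij,kl}$ forces $Y$ to be independent of $l$, and identity $(b)$ forces $Y$ to be independent of $i$, so $Y(\alpha,i,j,k,l) =: X^\alpha_{jk}$. Finally, evaluating $\theta\Delta = \id$ on $v^\alpha_{pq}$ gives $\sum_r X^\alpha_{rr} v^\alpha_{pq} = v^\alpha_{pq}$, hence $\sum_r X^\alpha_{rr} = 1$.

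The main obstacle is the slicing step: because $\theta$ is only assumed normal (not necessarily mapping $\mc A \otimes \mc A$ into $\mc A$), one has to argue a priori in $M$, and the key leverage is that after slicing, the image lands in a \emph{finite-dimensional} (hence closed) subspace, so linear independence of matrix coefficients across inequivalent irreducibles can be used to pin down the form. Everything else is a routine coefficient comparison once the dual functionals $\omega_{(\gamma,p,q)}$ are in hand.
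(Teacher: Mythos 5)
Your proposal is correct and follows essentially the same route as the paper: both directions hinge on using $\Delta\theta=(\theta\otimes\id)(\id\otimes\Delta)=(\id\otimes\theta)(\Delta\otimes\id)$ together with the Haar-state orthogonality relations to show $\theta(v^\alpha_{ij}\otimes v^\beta_{kl})$ lies simultaneously in $\lin\{v^\alpha_{im}\}$ and $\lin\{v^\beta_{ml}\}$, whence it is a multiple of $\delta_{\alpha\beta}v^\alpha_{il}$, with the trace condition coming from $\theta\Delta=\id$. The only cosmetic difference is that you slice with dual-basis functionals $\omega_{(\gamma,p,q)}$ on coefficient elements, whereas the paper keeps one leg an arbitrary $x\in M$ and extracts coefficients by multiplying by $(v^\alpha_{kl})^*$ and applying $\iota\otimes\varphi$; your explicit coefficient comparison showing the scalar is independent of $i$ and $l$ is in fact slightly more careful than the paper's ``by linear independence, we see immediately''.
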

\begin{proof}
The ``if'' part follows as $\mc A$ generates $M$ and $\theta$ is normal.

Conversely, let $x\in M$ and $\alpha\in\mathbb A$.  For $1\leq i,j\leq n_\alpha$,
\begin{align*} \Delta\theta\big( x \otimes v^\alpha_{ij} \big)
&= (\theta\otimes\id)(\id\otimes\Delta)\big( x \otimes v^\alpha_{ij} \big)
= \sum_{r=1}^{n_\alpha} \theta\big( x \otimes v^\alpha_{ir} \big)
\otimes v^\alpha_{rj}.
\end{align*}
Let $a_{ij} = \theta(x \otimes v^\alpha_{ij})$, so that
$\Delta(a_{ij}) = \sum_r a_{ir} \otimes v^\alpha_{rj}$.  As $\Delta$ is
a $*$-homomorphism, for $1\leq k,l\leq n_\alpha$, we have that
\[ \Delta\big(a_{ij} (v^\alpha_{kl})^*\big) = 
\sum_{r,s=1}^{n_\alpha} a_{ir} (v^\alpha_{ks})^* \otimes v^\alpha_{rj}(v^\alpha_{sl})^*. \]
Applying $(\iota\otimes\varphi)$, we see that,
by the calculations in Proposition~\ref{spec_coreps},
\[ \varphi\big(a_{ij} (v^\alpha_{kl})^*\big) 1 =
\sum_{r,s=1}^{n_\alpha} a_{ir} (v^\alpha_{ks})^* \varphi\big(v^\alpha_{rj}(v^\alpha_{sl})^*\big)
= \delta_{jl} \frac{\lambda^\alpha_j}{\Tr_\alpha} \sum_{r=1}^{n_\alpha} a_{ir} (v^\alpha_{kr})^*. \]
As $v^\alpha$ is a unitary matrix, we see that $1 = \sum_k (v^\alpha_{kr})^* v^\alpha_{ks} =
\delta_{rs} 1$ for $1\leq r,s\leq n_\alpha$.  Thus
\[ \sum_{k=1}^{n_\alpha} \varphi\big(a_{ij} (v^\alpha_{kl})^*\big) v^\alpha_{ks}
= \delta_{jl} \frac{\lambda^\alpha_j}{\Tr_\alpha}
\sum_{r,k=1}^{n_\alpha} a_{ir} (v^\alpha_{kr})^* v^\alpha_{ks}
= \delta_{jl} \frac{\lambda^\alpha_j}{\Tr_\alpha} a_{is}. \]
It follows that
\[ a_{is} = \frac{\Tr_\alpha}{\lambda^\alpha_j}
\sum_{k=1}^{n_\alpha} \varphi\big(a_{ij} (v^\alpha_{kj})^*\big) v^\alpha_{ks}
\qquad \big( \alpha\in\mathbb A, 1\leq i,j,s\leq n_\alpha \big). \]

Similarly, if we set $b_{ij} = \theta(v^\alpha_{ij}\otimes x)$, then
$\Delta(b_{ij}) = \sum_r v^\alpha_{ir} \otimes b_{rj}$, and we can show that
\[ b_{sj} = \lambda^\alpha_i \Tr_\alpha \sum_{k=1}^{n_\alpha}
\varphi\big( (v^\alpha_{ik})^* b_{ij} \big) v^\alpha_{sk}
\qquad \big( \alpha\in\mathbb A, 1\leq i,j,s\leq n_\alpha \big). \]

In particular, we see that $\theta(v^\alpha_{ij} \otimes v^\beta_{kl})$ is
in the linear span of $\{ v^\alpha_{is} : 1\leq s\leq n_\alpha\}$, and
the linear span of $\{ v^\beta_{rl} : 1\leq r\leq n_\beta\}$.  Hence
$\theta(v^\alpha_{ij} \otimes v^\beta_{kl}) = 0$ if $\alpha\not=\beta$.
If $\alpha=\beta$, then by linear independence, we see immediately that
\[ \theta(v^\alpha_{ij} \otimes v^\alpha_{kl}) = X^\alpha_{jk} v^\alpha_{il}, \]
for some scalar $X^\alpha_{jk}$.
Finally, as $\sum_k \theta(v^\alpha_{ik}\otimes v^\alpha_{kj})
= v^\alpha_{ij}$, it follows $\sum_k X^\alpha_{kk}=1$, as required.
\end{proof}

\begin{theorem}
Let $(A,\Delta)$ be a compact quantum group with associated von Neumann
algebra $M$.  Let $\theta_*:M_*\rightarrow M_*\proten M_*$ be a splitting
morphism, and suppose further that
$\theta=\theta_*^*$ is an $M$-bimodule map, in the sense that $\theta(\Delta(a)x\Delta(b))
= a \theta(x) b$ for $x\in M\vnten M$ and $a,b\in M$.
Then the Haar state $\varphi$ is tracial, so $(M,\Delta)$ is a Kac algebra.
\end{theorem}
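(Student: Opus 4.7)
The plan is to apply the bimodule condition $\theta(\Delta(a)x\Delta(b)) = a\theta(x)b$ with two symmetric choices of $(a,b,x)$, producing two competing explicit formulas for the matrices $X^\alpha$ of Proposition~\ref{theta_struc}, and comparing them to conclude that $F^\alpha = I$ for every $\alpha$.

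First I would extract a basic consequence of the formula in Proposition~\ref{theta_struc}: since $v^{\alpha_0}_{11} = 1$ and $X^{\alpha_0}_{11} = 1$, specialising $v^\beta_{kl} = 1$ gives $\theta(v^\beta_{ij}\otimes 1) = \delta_{\beta\alpha_0}\cdot 1 = \varphi(v^\beta_{ij})\cdot 1$; by linearity and normality, $\theta(y\otimes 1) = \varphi(y)\cdot 1$, and symmetrically $\theta(1\otimes y) = \varphi(y)\cdot 1$, for every $y\in M$. In particular $\varphi\circ\theta = \varphi\otimes\varphi$, which I will use to turn the bimodule identity into scalar equations.

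Next apply the bimodule condition with $a = 1$, $b = (v^\alpha_{rs})^*$, and $x = v^\alpha_{ij}\otimes v^\alpha_{kl}$. Expanding $\Delta((v^\alpha_{rs})^*) = \sum_w (v^\alpha_{rw})^*\otimes (v^\alpha_{ws})^*$, applying $\varphi$ to both sides, and using $\varphi\circ\theta = \varphi\otimes\varphi$ together with the orthogonality relation $\varphi(v^\alpha_{ab}(v^\alpha_{cd})^*) = \delta_{ac}\delta_{bd}\lambda^\alpha_b/\Tr_\alpha$ from Proposition~\ref{spec_coreps}, a short index manipulation collapses the identity to $X^\alpha_{jk} = \delta_{jk}\lambda^\alpha_j/\Tr_\alpha$. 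The symmetric computation with $a = (v^\alpha_{pq})^*$, $b = 1$, and the same $x$, using the companion relation $\varphi((v^\alpha_{ab})^*v^\alpha_{cd}) = \delta_{ac}\delta_{bd}/(\lambda^\alpha_a\Tr_\alpha)$, yields $X^\alpha_{jk} = \delta_{jk}/(\lambda^\alpha_j\Tr_\alpha)$.

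Equating the two diagonal expressions forces $(\lambda^\alpha_j)^2 = 1$, hence $\lambda^\alpha_j = 1$ since $F^\alpha$ is positive. As $\alpha$ and $j$ were arbitrary, $F^\alpha = I$ for every $\alpha$, so $\sigma_t = \rho_{it,it}$ fixes each generator $v^\alpha_{ij}$ and, by normality, acts trivially on $M$. Therefore $\varphi$ is a trace and $(M,\Delta)$ is a Kac algebra. There is no real obstacle — the proof is essentially index manipulation; the one point to notice is that adjoints $(v^\alpha_{rs})^*$ (rather than the matrix coefficients themselves) are the right test elements for $a$ and $b$, because these are what make the two orthogonality relations of Proposition~\ref{spec_coreps} directly applicable and so extract the $F$-matrix data.
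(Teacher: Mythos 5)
Your proposal is correct and follows essentially the same route as the paper: both apply the one-sided bimodule identities $\theta(x\Delta(b))=\theta(x)b$ and $\theta(\Delta(a)x)=a\theta(x)$ to matrix coefficients, apply $\varphi$ using the two orthogonality relations of Proposition~\ref{spec_coreps} to get the competing formulas $X^\alpha_{jk}=\delta_{jk}\lambda^\alpha_j/\Tr_\alpha$ and $X^\alpha_{jk}=\delta_{jk}/(\lambda^\alpha_j\Tr_\alpha)$, and conclude $\lambda^\alpha_j=1$. Your preliminary observation that $\varphi\circ\theta=\varphi\otimes\varphi$ is just a clean packaging of the paper's ``other terms vanish under $\varphi$'' step.
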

\begin{proof}
Let $\alpha\in\mathbb A$ and $1\leq i,j,k\leq n_\alpha$.  As $\theta(x\Delta(b))
= \theta(x)b$ for $x\in M\vnten M$ and $b\in M$, using the
notation of the last proposition, we see that
\begin{equation}\label{eq:one} X^\alpha_{jk} v^\alpha_{ij} (v^\alpha_{ij})^* =
\theta\big( v^\alpha_{ij} \otimes v^\alpha_{kj} \big) (v^\alpha_{ij})^*
= \sum_{l=1}^{n_\alpha} \theta\big( v^\alpha_{ij}(v^\alpha_{il})^* \otimes
v^\alpha_{kj} (v^\alpha_{lj})^* \big). \end{equation}

Now, as $\{ v^\beta_{rs} \}$ forms a basis for the $*$-algebra $\mc A$, and as $\varphi$
picks out the trivial corepresentation $v^{\alpha_0} = 1$, by the calculations
of Proposition~\ref{spec_coreps}, we see that
\[ v^\alpha_{ij}(v^\alpha_{il})^* \otimes
v^\alpha_{kj} (v^\alpha_{lj})^* = \delta_{jl}\frac{\lambda^\alpha_j}{\Tr_\alpha}
1 \otimes \delta_{kl} \frac{\lambda^\alpha_j}{\Tr_\alpha} 1
+ \text{other terms}. \]
By the structure of $\theta$ established in the last proposition,
it follows that
\[ \sum_{l=1}^{n_\alpha} \varphi\theta\big( v^\alpha_{ij}(v^\alpha_{il})^* \otimes
v^\alpha_{kj} (v^\alpha_{lj})^* \big)
= \delta_{jk} \Big( \frac{\lambda^\alpha_j}{\Tr_\alpha} \Big)^2 1
+ \text{other terms}. \]
By applying $\varphi$ to (\ref{eq:one}), we conclude that
\[ X^\alpha_{jk} \frac{\lambda^\alpha_j}{\Tr_\alpha}
= \delta_{jk} \Big( \frac{\lambda^\alpha_j}{\Tr_\alpha} \Big)^2
\quad\text{so that}\quad
X^\alpha_{jk} = \delta_{jk} \frac{\lambda^\alpha_j}{\Tr_\alpha}. \]

We now repeat this argument on the right, so we find that
\begin{align*} X^\alpha_{jk} (v^\alpha_{ij})^* v^\alpha_{ij} &=
(v^\alpha_{ij})^* \theta\big( v^\alpha_{ij} \otimes v^\alpha_{kj} \big)
= \sum_s \theta\big( (v^\alpha_{is})^* v^\alpha_{ij} \otimes (v^\alpha_{sj})^* v^\alpha_{kj} \big) \\
&= \sum_s \delta_{sj} \frac{1}{\lambda^\alpha_i \Tr_\alpha} 
  \delta_{sk} \frac{1}{\lambda^\alpha_k \Tr_\alpha} 1 + \text{other terms}
\end{align*}
Again, by applying $\varphi$ we see that
\[ X^\alpha_{jk} \frac{1}{\lambda^\alpha_i \Tr_\alpha} =
\delta_{jk} \frac{1}{\lambda^\alpha_i \Tr_\alpha} \frac{1}{\lambda^\alpha_k \Tr_\alpha}
\quad\text{so that}\quad
X^\alpha_{jk} = \delta_{jk} \frac{1}{\lambda^\alpha_k \Tr_\alpha}. \]

We hence see that for all $\alpha$ and $1\leq k\leq n_\alpha$, we have
$\lambda^\alpha_k = 1 / \lambda^\alpha_k$.  As $\lambda^\alpha_k > 0$, we see that
$\lambda^\alpha_k = 1$.  In particular, the modular automorphism group $\sigma$ is
trivial, and so $\varphi$ is tracial, as claimed.

Indeed, if $\varphi$ is tracial, then from Proposition~\ref{spec_coreps},
we see that $\lambda^\alpha_j = (\lambda^\alpha_i)^{-1}$ for all $i,j$.  Thus
$\lambda^\alpha_i=1$ for all $i$ and $\alpha$.  It follows that the automorphism
$\rho_{z,w}$ are trivial, and hence also the scaling group is trivial.  So the
antipode $S$ is bounded.  It is now easy to verify the axioms of a compact Kac algebra,
see \cite[Section~6.2]{ES}.
\end{proof}

We note that an argument of Soltan, \cite[Remark~A.2]{soltan}, shows that if a
compact quantum group $(A,\Delta)$ has a faithful family of tracial states (that is,
for non-zero $x\in A$ there is a tracial state $\phi$ with $\phi(x^*x)\not=0$) then
$(M,\Delta)$ is a Kac algebra.

\begin{theorem}
Let $(A,\Delta)$ be a compact quantum group with associated von Neumann
algebra $M$.  Let $\theta_*:M_*\rightarrow M_*\proten M_*$ be a splitting
morphism.  Suppose that $\theta = \theta_*^*$ is completely positive, or
that $\Delta\theta$ is a contraction.  Then $(M,\Delta)$ is a Kac algebra.
\end{theorem}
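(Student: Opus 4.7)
My plan is to reduce both hypotheses to the bimodule condition of the previous theorem---namely that $\theta(\Delta(a) x \Delta(b)) = a\theta(x) b$ for all $a,b\in M$ and $x\in M\vnten M$---and then invoke that result.  Since $\theta\Delta = \id$ and $\Delta(1) = 1$, the map $\theta$ is unital, and $\theta\Delta = \id$ also immediately gives injectivity of $\Delta$.  I then set
\[ E := \Delta\theta : M\vnten M \to M\vnten M, \]
so that $E^2 = \Delta(\theta\Delta)\theta = \Delta\theta = E$.  Thus $E$ is a unital, normal idempotent onto $\Delta(M)$, which is a C$^*$-subalgebra (indeed a von Neumann subalgebra, since $\Delta$ is a normal unital $*$-homomorphism) of $M\vnten M$.

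The crux of the argument is Tomiyama's theorem: any norm-one projection from a C$^*$-algebra onto a C$^*$-subalgebra is automatically a conditional expectation, hence in particular a bimodule map over its image.  In the completely positive case, $E$ is the composition of $\theta$ (unital completely positive) with the $*$-homomorphism $\Delta$, so $E$ is itself unital completely positive and therefore contractive.  In the contractive case, $\|E\|\le 1$ is assumed directly.  Either way, $E$ is a unital contractive idempotent, so Tomiyama applies and yields
\[ \Delta\theta(\Delta(a) x \Delta(b)) = \Delta(a)\,\Delta\theta(x)\,\Delta(b) = \Delta(a \theta(x) b) \qquad (a,b\in M,\ x\in M\vnten M). \]
Injectivity of $\Delta$ then gives $\theta(\Delta(a) x \Delta(b)) = a \theta(x) b$, which is precisely the hypothesis of the previous theorem.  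Invoking it concludes that $(M,\Delta)$ is a Kac algebra.

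The only genuinely nontrivial step is the invocation of Tomiyama's theorem; everything else is formal manipulation.  Its applicability requires the range of $E$ to be a C$^*$-subalgebra (automatic from normality of $\Delta$) and $\|E\|=1$ (handled case-by-case above).  Thus the main obstacle, such as it is, lies in recognising that a splitting morphism whose dual is well-behaved enough must produce a conditional expectation onto $\Delta(M)$; once that is seen, the reduction to the previous theorem is immediate and no further calculation is required.
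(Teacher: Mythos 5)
Your proposal is correct and follows essentially the same route as the paper: both arguments observe that $\Delta\theta$ is a contractive (in the completely positive case, because $\theta$ is unital) idempotent onto the von Neumann subalgebra $\Delta(M)$, apply Tomiyama's theorem to obtain the conditional expectation property, and use injectivity of $\Delta$ to transfer the bimodule identity to $\theta$ so that the preceding theorem applies. No gaps.
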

\begin{proof}
As $\theta(1)=\theta\Delta(1)=1$, if $\theta$ is positive, then $\theta$ is
contractive, so $\Delta\theta$ is contractive.  

We have that $\Delta\theta:M\vnten M\rightarrow M\vnten M$ is contractive, and
is a projection of $M\vnten M$ onto the subalgebra $\Delta(M)$.  A result of
Tomiyama, \cite{tom} or \cite[Theorem~3.4, Chapter~III]{tak1}, tells us that, in
particular, $\Delta\theta(\Delta(a)x\Delta(b)) = \Delta(a) \theta(x) \Delta(b)$
for $a,b\in M$ and $x\in M\vnten M$.  As $\Delta$ is an injective homomorpshim,
the above theorem applies.
\end{proof}

In the following section, we shall show the converse to this corollary: namely
that for a compact Kac algebra $(M,\Delta)$, we can choose $\theta$ to be a
complete contraction; alternatively, see \cite{RX} or \cite{aristov}.

It is shown in \cite{CS} that if we have a completely bounded map
$\theta:M\vnten M\rightarrow M$ with $\theta\Delta=\id$ then there exists
a completely bounded map $\theta_1:M\vnten M\rightarrow M$ which is an $M$-bimodule
map, in the above sense.  However, there is no reason that $\theta_1$ need
be normal, and no reason that the other conditions on $\theta$ will carry over
to $\theta_1$, so that Proposition~\ref{theta_struc} need not apply to $\theta_1$.
We can even choose $\theta_1$ to be completely positive, which were it also
\emph{faithful} would imply, by \cite[Theorem~4.2, Chapter~IX]{tak2}, the existence
of a weight $\omega$ on $M\vnten M$ with interesting modular properties.  Again,
there seems to be no reason to expect that we can choose $\theta_1$ in such a way.

\section{Completely bounded maps}

There is a well-known structure theory for completely bounded maps, \cite[Section~5.3]{ER}.
If $\theta:N\rightarrow (M,H)$ is a completely positive normal map between von Neumann
algebras, then the usual proof of the Stinespring theorem (for example, \cite[Chapter~IV,
Theorem~3.6]{tak1}) can be adapted to show that there exists a Hilbert space $K$,
a \emph{normal} $*$-homomorphism $\pi:N\rightarrow\mc B(K)$ and a bounded map
$U:H\rightarrow K$ such that $\theta(x) = U^* \pi(x) U$ for $x\in N$.

Showing the same for completely bounded maps is not quite as simple, but the details
are worked out in, for example, the proof of \cite[Theorem~2.4]{HM}.  In particular,
given $\theta:N\rightarrow (M,H)$ a completely contractive normal map between von Neumann
algebras, there exist unital completely positive \emph{normal} maps $\phi_1,\phi_2:N
\rightarrow M$ such that
\[ \sigma:\mathbb M_2(N) \rightarrow \mathbb M_2(M); \qquad
\begin{pmatrix} a & b \\ c & d \end{pmatrix} \mapsto
\begin{pmatrix} \phi_1(a) & \theta(b^*)^* \\ \theta(c) & \phi_2(d) \end{pmatrix} \]
is unital completely positive and normal.  One can now follow the presentation in
\cite[Theorem~5.33]{ER} or \cite{Paulsen}, essentially
applying the Stinespring construction
to $\sigma$.  This yields a Hilbert space $K$, a normal $*$-homomorphism $\rho:
\mathbb M_2(N)\rightarrow\mc B(K)$ and an isometry $U:H^2 \rightarrow K$ such that
$\sigma(x) = U^*\rho(x)U$ for $x\in\mathbb M_2(N)$.  Following the proof of
\cite[Theorem~2.3]{HM}, there also exists a normal $*$-homomorphism $\rho':\mathbb M_2(M)'
\rightarrow \rho(\mathbb M_2(N))'$ such that $\rho'(y)U = Uy$ for $y\in\mathbb M_2(M)'$.

Define $\pi:M\vnten M\rightarrow\mc B(K)$, $\pi':M'\rightarrow\mc B(K)$
and $S,T:H\rightarrow K$ by
\[ \pi(x) = \rho \begin{pmatrix} x & 0 \\ 0 & x \end{pmatrix}, \quad
\pi'(y) = \rho' \begin{pmatrix} y & 0 \\ 0 & y \end{pmatrix}, \quad
T(\xi) = \rho\begin{pmatrix} 0 & 0 \\ 1 & 0 \end{pmatrix}
U \begin{pmatrix} \xi \\ 0 \end{pmatrix}, \quad
S(\xi) = U \begin{pmatrix} 0 \\ \xi \end{pmatrix}, \]
for $x\in M\vnten M, y\in M'$ and $\xi\in H$.
So $\pi$ and $\pi'$ are normal $*$-homomorphisms
and $S$ and $T$ are contractions.
Then, for $x\in M\vnten M$ and $\xi,\eta\in H$,
\begin{align*} \big( S^*\pi(x)T\xi \big| \eta \big) &= 
\Big( \rho \begin{pmatrix} x & 0 \\ 0 & x \end{pmatrix}
\rho\begin{pmatrix} 0 & 0 \\ 1 & 0 \end{pmatrix}
U \begin{pmatrix} \xi \\ 0 \end{pmatrix}
\Big| U \begin{pmatrix} 0 \\ \eta \end{pmatrix} \Big) \\
&= \Big( \sigma \begin{pmatrix} 0 & 0 \\ x & 0 \end{pmatrix}
\begin{pmatrix} \xi \\ 0 \end{pmatrix} 
\Big| \begin{pmatrix} 0 \\ \eta \end{pmatrix} \Big)
= \big( \theta(x) \xi \big| \eta \big).
\end{align*}
So $\theta(x) = S^*\pi(x)T$.  Then also, for $y\in M'$ and $\xi\in H$,
\[ T y \xi = \rho\begin{pmatrix} 0 & 0 \\ 1 & 0 \end{pmatrix}
U \begin{pmatrix} y & 0 \\ 0 & y \end{pmatrix}
\begin{pmatrix} \xi \\ 0 \end{pmatrix}
= \rho\begin{pmatrix} 0 & 0 \\ 1 & 0 \end{pmatrix}
\pi'(y) U \begin{pmatrix} \xi \\ 0 \end{pmatrix}
= \pi'(y) T \xi. \]
So $T y = \pi'(y) T$ and similarly $S y = \pi'(y) S$, for $y\in M'$.

Let $M$ be a von Neumann algebra with a normal faithful state $\varphi$,
leading to GNS construction $(H,\Lambda)$ (here we identify $M$ with
a subalgebra of $\mc B(H)$).
We can apply Tomita-Takesaki theory to find an anti-linear isometry $J:H\rightarrow H$
such that $M' = JMJ$ (see \cite{tak2}).  Let $(\sigma_t)_{t\in\R}$ be the modular
automorphism group, and let $\mc A\subseteq M$ be a $*$-subalgebra of elements analytic
for $(\sigma_t)$ such that $\sigma_z(a)\in\mc A$ for $z\in\C$ and $a\in\mc A$.
For $a\in \mc A$, write $a' = J\sigma_{i/2}(a)^* J$.  Then
\[ a' \Lambda(1) = J\sigma_{i/2}(a)^* J \Lambda(1) = \Lambda(a) \qquad (a\in\mc A). \]

\begin{proposition}
Let $M$ be a von Neumann algebra as above, and suppose that $\mc A''=M$.
Let $N$ be a von Neumann algebra.  If $\theta:N\rightarrow M$ is completely bounded
normal map, then we can find a Hilbert space $K$, normal $*$-homomorphisms
$\pi:N\rightarrow\mc B(K)$ and $\pi':M'\rightarrow \pi(N)'$, 
and $\xi_0,\xi_1\in K$ such that the maps
\[ \Lambda(a)\mapsto \pi'(a')\xi_0, \qquad
\Lambda(a)\mapsto \pi'(a')\xi_1 \qquad (a\in\mc A), \]
are bounded, and
\begin{equation}\label{eq:two}
\varphi(\theta(x)a) = \big( \pi(x) \pi'(a') \xi_0 \big| \xi_1 \big)
\qquad (x\in N, a\in\mc A). \end{equation}
Conversely, given such $K,\pi,\pi',\xi_0$ and $\xi_1$, there exists a
completely bounded normal map $\theta:N\rightarrow M$ satisfying (\ref{eq:two}).

Furthermore, $\theta$ is completely positive if and only if we can choose
$\xi_0=\xi_1$.
\end{proposition}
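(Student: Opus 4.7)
The plan is to combine the Stinespring-type decomposition $\theta(x) = S^* \pi(x) T$ obtained in the discussion preceding this proposition with the Tomita--Takesaki identity $a' \Lambda(1) = \Lambda(a)$. For the forward direction, I would set $\xi_0 = T\Lambda(1)$ and $\xi_1 = S\Lambda(1)$. The two maps $\Lambda(a) \mapsto \pi'(a')\xi_i$ are then nothing other than the restrictions of $T$ and $S$ to $\Lambda(\mc A)$, since $\pi'(a')\xi_0 = \pi'(a') T\Lambda(1) = T a'\Lambda(1) = T\Lambda(a)$, using $a' \in M'$ and the intertwining $Ty = \pi'(y)T$ for $y \in M'$ already established; hence they are automatically bounded. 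The identity (\ref{eq:two}) then drops out from the representation $\varphi(y) = (y\Lambda(1) | \Lambda(1))$ and a one-line commutation: $\varphi(\theta(x)a) = (\theta(x)\Lambda(a) | \Lambda(1)) = (S^* \pi(x) T a'\Lambda(1) | \Lambda(1)) = (\pi(x)\pi'(a')\xi_0 | \xi_1)$.

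For the converse, the plan is to reverse this process. First I would note that $\Lambda(\mc A)$ is dense in $H$: Kaplansky density applied to $\mc A'' = M$ gives a bounded net in $\mc A$ converging strongly to any $x \in M$, and applying such a net to the fixed vector $\Lambda(1)$ shows $\Lambda(\mc A)$ is norm-dense in $M\Lambda(1) = H$. The hypothesis then extends the assignments $\Lambda(a) \mapsto \pi'(a')\xi_i$ to bounded operators $T, S : H \to K$. The crux is to show that $T$ and $S$ intertwine the $M'$-action, i.e.\ $Ty = \pi'(y)T$ and $Sy = \pi'(y)S$ for all $y \in M'$. For $y = b'$ with $b \in \mc A$ this is a purely algebraic check: anti-multiplicativity of $a \mapsto a'$ (a consequence of $\sigma_{i/2}$ being an automorphism and $J$ being anti-linear) gives $b' \Lambda(a) = (ab)'\Lambda(1) = \Lambda(ab)$, so both sides of the intertwining equation reduce to $\pi'(b'a')\xi_0$. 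To pass to arbitrary $y \in M'$, I would observe that $\{a' : a \in \mc A\}$ equals $J \mc A J$ (using $\sigma_z$-invariance and $\mc A = \mc A^*$), and hence is weakly dense in $J M J = M'$; a bounded strong approximation combined with normality of $\pi'$ completes this step. This density-plus-normality argument is the main obstacle; everything else is formal.

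Once the intertwining is in place, I would define $\theta(x) := S^* \pi(x) T$, which is normal and completely bounded by construction. To see $\theta(x) \in M = M''$, I would take $y \in M'$ and compute $\theta(x) y = S^* \pi(x) \pi'(y) T = S^* \pi'(y) \pi(x) T = y \theta(x)$, using that $\pi(x)$ commutes with $\pi'(y) \in \pi(N)'$ and adjointing $S y^* = \pi'(y^*) S$ to obtain $S^* \pi'(y) = y S^*$. The identity (\ref{eq:two}) is then the same calculation as in the forward direction. For the completely positive clause: if $\theta$ is CP, the usual Stinespring theorem (in place of the $2 \times 2$ trick) gives $\theta(x) = U^* \pi(x) U$ with a single intertwiner, so $S = T = U$ and $\xi_0 = \xi_1 = U\Lambda(1)$; conversely, $\xi_0 = \xi_1$ forces $S = T$ by the defining formula, making $\theta(x) = T^* \pi(x) T$ manifestly completely positive.
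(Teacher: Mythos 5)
Your proof is correct and follows essentially the same route as the paper: both directions rest on the decomposition $\theta = S^*\pi(\cdot)T$ with the commutant intertwiners from the preceding discussion and on the identity $a'\Lambda(1)=\Lambda(a)$, taking $\xi_0=T\Lambda(1)$ and $\xi_1=S\Lambda(1)$. In the converse direction you are in fact slightly more thorough than the paper, which verifies the formula by a KMS-condition computation against $\Lambda(b)$ and leaves implicit the check that $S^*\pi(x)T$ lies in $M$; your Kaplansky-plus-normality extension of the intertwining relation from $J\mc{A}J$ to all of $M'$, followed by the double-commutant argument, supplies exactly that missing verification.
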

\begin{proof}
As $\mc A''=M$, it follows that $\mc A$ is strongly dense in $M$ and hence
that $\Lambda(\mc A)$ is norm dense in $H$.
If $\theta$ is of the form claimed, then the map $T:\Lambda(\mc A)\rightarrow
K; \Lambda(a)\mapsto \pi'(a')\xi_0$ is bounded and so extends to a bounded linear
map $T:H\rightarrow K$.  Similarly, there exists $S\in\mc B(H,K)$ with
$S\Lambda(a) = \pi'(a')\xi_1$.  Then, for $a,b\in\mc A$ and $x\in N$,
\begin{align*} \big( S^*\pi(x) T \Lambda(a) \big| \Lambda(b) \big)
&= \big( \pi(x) \pi'(a') \xi_0 \big| \pi'(b') \xi_1 \big)
= \big( \pi(x) \pi'((b')^*a') \xi_0 \big| \xi_1 \big) \\
&= \varphi\big( \theta(x) a\sigma_{-i}(b^*) \big),
\end{align*}
as $(b')^* = (J\sigma_{i/2}(b)^*J)^*
= J \sigma_{i/2}(b) J = J \sigma_{i/2}(c)^* J = c'$ if $c = \sigma_{-i}(b^*)$,
and $d\mapsto d'$ is an anti-homomorphism.  By the KMS condition, we see that
\[ \big( S^*\pi(x) T \Lambda(a) \big| \Lambda(b) \big)
= \varphi\big( b^* \theta(x) a \big)
= \big( \theta(x) \Lambda(a) \big| \Lambda(b) \big). \]
  Hence $\theta$ is completely bounded,
as $\theta(x) = S^*\pi(x)T$ for $x\in N$.  If $\xi_0=\xi_1$ then $S=T$ and
$\theta$ is completely positive.

Conversely, given $\theta$, from the discussion above, we can find normal $*$-homomorphisms
$\pi:N\rightarrow \mc B(K)$ and $\pi':M'\rightarrow\pi(N)'$, and bounded maps
$S,T:H\rightarrow K$ with $\theta(x) = S^*\pi(x)T$ for $x\in N$ and
$Sy=\pi'(y)S, Ty=\pi'(y)T$ for $y\in M'$.  Thus, for $x\in N$ and $a\in\mc A$,
\[ \varphi(\theta(x)a) = \big( S^*\pi(x)T\Lambda(a) \big| \Lambda(1) \big)
= \big( S^* \pi(x) \pi'(a') T \Lambda(1) \big| \Lambda(1) \big), \]
so the proof is complete by setting $\xi_0=T\Lambda(1)$ and $\xi_1=S\Lambda(1)$.
If $\theta$ is completely positive, then we can set $S=T$ and hence $\xi_0=\xi_1$.
\end{proof}

Notice that by the KMS condition, the calculations above also show that if
$x,y\in M$ are such that $\varphi(xa) = \varphi(ya)$ for all $a\in\mc A$, then $x=y$.

The following is proved using different methods in \cite{RX} and \cite{aristov}.
Our proof makes explicit how $\varphi$ being tracial, for a Kac algebra, is central
to the proof, and indicates that understanding the modular properties of $\varphi$
for a general compact quantum group will be important in finding a completely
bounded analogue of the following.

\begin{theorem}
Let $(M,\Delta)$ be a compact Kac algebra.  Then there exists a splitting morphism
$\theta_*:M_*\rightarrow M_* \proten M_*$ such that $\theta = \theta_*^*$ is
completely positive.
\end{theorem}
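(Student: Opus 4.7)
The plan is to realise $\theta$ in Stinespring form. Define $V\colon H \to H \otimes H$ on the dense subspace $\Lambda(\mc A)$ by $V\Lambda(a) = (\Lambda\otimes\Lambda)\Delta(a)$. In the Kac case all $\lambda^\alpha_i = 1$ (Proposition~\ref{spec_coreps}), so tracial Schur orthogonality yields
\[
\bigl(V\Lambda(v^\alpha_{il}) \big| V\Lambda(v^\beta_{pq})\bigr)
= \sum_{m,r} \varphi\bigl((v^\beta_{pr})^* v^\alpha_{im}\bigr)\, \varphi\bigl((v^\beta_{rq})^* v^\alpha_{ml}\bigr)
= \frac{\delta_{\alpha\beta}\delta_{ip}\delta_{lq}}{n_\alpha}
= \bigl(\Lambda(v^\alpha_{il}) \big| \Lambda(v^\beta_{pq})\bigr),
\]
so $V$ extends to an isometry $H \to H \otimes H$. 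Since $\Delta$ is multiplicative on $\mc A$, one obtains the intertwining $Va = \Delta(a) V$ for $a \in M$, and on taking adjoints $V^*\Delta(a) = aV^*$; in particular, $V^*\Delta(a)V = a$.

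I apply the converse direction of the preceding proposition with $K = H \otimes H$, $\pi$ the inclusion $M\vnten M \hookrightarrow \mc B(H \otimes H)$, $\xi_0 = \xi_1 = \Lambda(1) \otimes \Lambda(1)$, and
\[
\pi'\colon M' \to M'\vnten M' = \pi(M\vnten M)', \qquad \pi'(y) = (J \otimes J)\,\Delta(JyJ)\,(J \otimes J).
\]
The map $y \mapsto JyJ$ is an antilinear multiplicative $*$-preserving bijection $M' \to M$ because $(Jb_1 J)(Jb_2 J) = J(b_1 b_2) J$ (using $J^2 = \id$), $\Delta$ is a linear $*$-homomorphism, and conjugation by $J\otimes J$ is an antilinear multiplicative $*$-preserving map $M\vnten M \to M'\vnten M'$; the two antilinearities cancel, making $\pi'$ a normal $\C$-linear $*$-homomorphism. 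In the Kac case $\sigma_{i/2} = \id$, so $a' = J a^* J$ and $J\Lambda(c) = \Lambda(c^*)$; a short computation then gives $\pi'(a')\xi_0 = (\Lambda\otimes\Lambda)\Delta(a) = V\Lambda(a)$, so the boundedness hypothesis of the proposition is satisfied. With $\xi_0 = \xi_1$, the proposition delivers a normal completely positive $\theta\colon M\vnten M \to M$ satisfying $\varphi(\theta(x) a) = (x V\Lambda(a) \big| V\Lambda(1)) = \varphi(V^* x V \cdot a)$, and the faithfulness remark following that proposition forces $\theta(x) = V^* x V$.

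It remains to check the conditions of the lemma preceding Proposition~\ref{theta_struc}. The identity $\theta\Delta = \id$ is $V^*\Delta(a) V = a$ from the first paragraph. The coaction conditions $\Delta \theta = (\theta \otimes \id)(\id \otimes \Delta) = (\id \otimes \theta)(\Delta \otimes \id)$ are, by the adjoint argument given in the proof of that lemma, equivalent to the $M$-bimodule identities $\theta(\Delta(c) x) = c\theta(x)$ and $\theta(x \Delta(d)) = \theta(x) d$, both of which follow at once from $V^*\Delta(c) = cV^*$ and $\Delta(d) V = V d$.

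The main obstacle is establishing that $\pi'$ is a bona fide $\C$-linear normal $*$-homomorphism with the required intertwining $Vy = \pi'(y) V$: linearity is hidden by the two antilinear operations cancelling, multiplicativity hinges on $(Jb_1 J)(Jb_2 J) = J(b_1 b_2) J$, and the intertwining reduces to $\Delta(ab^*) = \Delta(a) \Delta(b)^*$, which is clean only when $J\Lambda(a) = \Lambda(a^*)$---that is, in the Kac case. In general $a' = J\sigma_{i/2}(a)^* J$ involves the modular element, and any analogous $\pi'$ would acquire a twist that breaks multiplicativity; this is precisely the obstruction that the paper identifies as central to the general completely bounded case.
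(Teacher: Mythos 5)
Your construction coincides with the paper's (the same $\pi'$, the same vectors $\xi_0=\xi_1=\Lambda(1)\otimes\Lambda(1)$; your isometry $V$ is precisely the operator $S=T$ that the proposition's proof produces), and everything through $\theta\Delta=\id$ is correct. The gap is in your final step. The ``adjoint argument'' in the lemma shows that the coaction identities $\Delta\theta=(\theta\otimes\id)(\id\otimes\Delta)=(\id\otimes\theta)(\Delta\otimes\id)$ are equivalent to $\theta_*$ being an $M_*$-bimodule map for the \emph{convolution} actions; they are not equivalent to the $M$-bimodule identities $\theta(\Delta(c)x\Delta(d))=c\,\theta(x)\,d$. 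Those are a genuinely different condition --- indeed, in Theorem~3.3 they appear as an \emph{additional} hypothesis on top of $\theta_*$ being a splitting morphism. If your claimed equivalence held, every splitting morphism's adjoint would automatically be an $M$-bimodule map, and Theorem~3.3 would then show that operator biprojectivity forces the Kac property, a conclusion the paper conspicuously does not draw. A concrete counterexample to the implication you use: for a finite group $G$, the map $\theta(F)(s)=F(s,e)$ on $L^\infty(G\times G)$ is a normal unital $*$-homomorphism with $\theta\Delta=\id$ and $\theta(\Delta(c)F\Delta(d))=c\,\theta(F)\,d$, yet
\[ (\theta\otimes\id)(\id\otimes\Delta)(F)(s,u)=F(s,u), \qquad \Delta\theta(F)(s,u)=F(su,e), \]
which differ.

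The repair is exactly the paper's route: compute $\theta$ on matrix coefficients. From $\varphi(\theta(x)a)=(\varphi\otimes\varphi)(x\Delta(a))$ and the orthogonality relations (with all $\lambda^\alpha_i=1$, $\Tr_\alpha=n_\alpha$) one finds $\theta(v^\alpha_{ij}\otimes v^\beta_{kl})=\delta_{\alpha\beta}\delta_{jk}n_\alpha^{-1}v^\alpha_{il}$, and then the ``if'' direction of Proposition~\ref{theta_struc} yields the coaction identities; equivalently, one checks directly that
\[ (\id\otimes\theta)(\Delta\otimes\id)\big(v^\alpha_{ij}\otimes v^\alpha_{kl}\big)
= \sum_m v^\alpha_{im}\otimes \frac{\delta_{jk}}{n_\alpha}v^\alpha_{ml}
= \frac{\delta_{jk}}{n_\alpha}\Delta\big(v^\alpha_{il}\big)
= \Delta\theta\big(v^\alpha_{ij}\otimes v^\alpha_{kl}\big). \]
The intertwining relations $Va=\Delta(a)V$ and $V^*\Delta(a)=aV^*$ alone do not suffice for this; what is really needed beyond them is the coassociativity relation $(1\otimes V)V=(V\otimes 1)V$ together with the explicit form of $\theta=V^*(\cdot)V$ on the coefficient algebra. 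The rest of your write-up (the isometry computation, the identification $\pi'(a')\xi_0=V\Lambda(a)$, and the identification $\theta(x)=V^*xV$ via the intertwining $Vy=\pi'(y)V$ for $y\in M'$, which guarantees $V^*xV\in M''=M$) is sound.
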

\begin{proof}
We have that $\varphi$ is tracial.  Let $\pi:M\vnten M\rightarrow M\vnten M
\subseteq \mc B(H\otimes H)$ be the trivial representation, let $\xi_0=\xi_1
=\Lambda(1)\otimes\Lambda(1)$, and define $\pi'$ by
\[ \pi'(y) = (J\otimes J) \Delta(JyJ) (J\otimes J)
\qquad (y\in M'). \]
This formula is derived from the natural coproduct on $M'$, see \cite[Section~4]{kus2}.
Let $\mc A$ be the Hopf $*$-algebra associated to $(M,\Delta)$, as before.
Then we can apply the above proposition to see that there exists a completely positive
normal map $\theta:M\vnten M\rightarrow M$ such that
\[ \varphi(\theta(x)a) = \big( x(J\otimes J)\Delta(a^*)(J\otimes J)
\Lambda(1)\otimes\Lambda(1) \big| \Lambda(1)\otimes\Lambda(1) \big), \]
where we use that $\sigma$ is trivial, as $\varphi$ is tracial.  Then
$J\Lambda(a) = \Lambda(a)^*$ for $a\in \mc A$, and so, as $\Delta(a^*)\in\mc A\otimes\mc A$,
\begin{align*} \varphi(\theta(x)a) &=
\big( x(J\otimes J)\Delta(a^*) \Lambda(1)\otimes\Lambda(1) \big| \Lambda(1)\otimes\Lambda(1) \big) \\
&= \big( x(\Lambda\otimes\Lambda)\Delta(a) \big| \Lambda(1)\otimes\Lambda(1) \big)
= (\varphi\otimes\varphi)\big( x\Delta(a) \big).
\end{align*}
In particular,
\[ \varphi(\theta\Delta(x)a) = (\varphi\otimes\varphi)\big( \Delta(xa) \big)
= \varphi(xa), \]
so by the observation above, $\theta\Delta=\id$.  Indeed, one may calculate
(thinking about Proposition~\ref{theta_struc}) that
\[ \theta(v^\alpha_{ij} \otimes v^\alpha_{kl}) = \frac{1}{n_\alpha} \delta_{jk} v^\alpha_{il}, \]
using that $\lambda^\alpha_i=1$ for all $\alpha$ and $i$.  Thus
also $\Delta\theta=(\theta\otimes\id)(\id\otimes\Delta)
=(\id\otimes\theta)(\Delta\otimes\id)$, and so $\theta_*$, the preadjoint
to $\theta$, is a splitting morphism, as required.
\end{proof}

If $\varphi$ is not tracial, then the above proof fails, as for $a\in\mc A$,
\[ \Delta(\sigma_{i/2}(a)^*) = \big((\tau_{i/2}\otimes\sigma_{i/2})\Delta(a)\big)^*, \]
and hence, as $J\Lambda(b) = \Lambda(\sigma_{i/2}(b)^*)$ for $b\in\mc A$,
\[ (J\otimes J)\Delta(\sigma_{i/2}(a)^*)(J\otimes J)(\Lambda(1)\otimes\Lambda(1))
= (\Lambda\otimes\Lambda)\big((\tau_{i/2}\sigma_{-i/2}\otimes\id)\Delta(a)\big). \]
If we continus to form $\theta$ as above, then we find that
\[ \theta\big( v^\alpha_{ij} \otimes v^\beta_{kl}\big) = \delta_{\alpha\beta}
 v^\alpha_{il} \frac{\delta_{jk}}{\Tr_\alpha} \qquad
(\alpha,\beta\in\mathbb A, 1\leq i,j \leq n_\alpha, 1\leq k,l \leq n_\beta ). \]
This is nearly of the correct form, but we find that
\[ \theta\Delta\big( v^\alpha_{ij} \big) = \frac{n_\alpha}{\Tr_\alpha} v^\alpha_{ij}
\qquad (\alpha\in\mathbb A, 1\leq i,j \leq n_\alpha ). \]
Notice that $n_\alpha = \sum_i (\lambda^\alpha_i)^{1/2} (\lambda^\alpha_i)^{-1/2}
\leq \big(\sum_i \lambda^\alpha \big)^{1/2} \big(\sum_i (\lambda^\alpha)^{-1} \big)^{1/2}
= \Tr_\alpha$, it follows that $\theta\Delta=\id$ if and only if $\lambda^\alpha_i=1$
for all $\alpha,i$, that is, again, $\varphi$ is tracial.

\bigskip
\noindent\textbf{Author's address:}
\parbox[t]{5in}{School of Mathematics,\\
University of Leeds,\\
Leeds LS2 9JT\\
United Kingdom}

\smallskip
\noindent\textbf{Email:} \texttt{matt.daws@cantab.net}

\end{document}